\journal{{\tt arXiv.org}}
\definecolor{greenyellow}   {cmyk}{0.15, 0   , 0.69, 0   }
\definecolor{yellow}        {cmyk}{0   , 0   , 1   , 0   }
\definecolor{goldenrod}     {cmyk}{0   , 0.10, 0.84, 0   }
\definecolor{dandelion}     {cmyk}{0   , 0.29, 0.84, 0   }
\definecolor{apricot}       {cmyk}{0   , 0.32, 0.52, 0   }
\definecolor{peach}         {cmyk}{0   , 0.50, 0.70, 0   }
\definecolor{melon}         {cmyk}{0   , 0.46, 0.50, 0   }
\definecolor{yelloworange}  {cmyk}{0   , 0.42, 1   , 0   }
\definecolor{orange}        {cmyk}{0   , 0.61, 0.87, 0   }
\definecolor{burntorange}   {cmyk}{0   , 0.51, 1   , 0   }
\definecolor{bittersweet}   {cmyk}{0   , 0.75, 1   , 0.24}
\definecolor{redorange}     {cmyk}{0   , 0.77, 0.87, 0   }
\definecolor{mahogany}      {cmyk}{0   , 0.85, 0.87, 0.35}
\definecolor{maroon}        {cmyk}{0   , 0.87, 0.68, 0.32}
\definecolor{brickred}      {cmyk}{0   , 0.89, 0.94, 0.28}
\definecolor{red}           {cmyk}{0   , 1   , 1   , 0   }
\definecolor{orangered}     {cmyk}{0   , 1   , 0.50, 0   }
\definecolor{rubinered}     {cmyk}{0   , 1   , 0.13, 0   }
\definecolor{wildstrawberry}{cmyk}{0   , 0.96, 0.39, 0   }
\definecolor{salmon}        {cmyk}{0   , 0.53, 0.38, 0   }
\definecolor{carnationpink} {cmyk}{0   , 0.63, 0   , 0   }
\definecolor{magenta}       {cmyk}{0   , 1   , 0   , 0   }
\definecolor{violetred}     {cmyk}{0   , 0.81, 0   , 0   }
\definecolor{rhodamine}     {cmyk}{0   , 0.82, 0   , 0   }
\definecolor{mulberry}      {cmyk}{0.34, 0.90, 0   , 0.02}
\definecolor{redviolet}     {cmyk}{0.07, 0.90, 0   , 0.34}
\definecolor{fuchsia}       {cmyk}{0.47, 0.91, 0   , 0.08}
\definecolor{lavender}      {cmyk}{0   , 0.48, 0   , 0   }
\definecolor{thistle}       {cmyk}{0.12, 0.59, 0   , 0   }
\definecolor{orchid}        {cmyk}{0.32, 0.64, 0   , 0   }
\definecolor{darkorchid}    {cmyk}{0.40, 0.80, 0.20, 0   }
\definecolor{purple}        {cmyk}{0.45, 0.86, 0   , 0   }
\definecolor{plum}          {cmyk}{0.50, 1   , 0   , 0   }
\definecolor{violet}        {cmyk}{0.79, 0.88, 0   , 0   }
\definecolor{royalpurple}   {cmyk}{0.75, 0.90, 0   , 0   }
\definecolor{blueviolet}    {cmyk}{0.86, 0.91, 0   , 0.04}
\definecolor{periwinkle}    {cmyk}{0.57, 0.55, 0   , 0   }
\definecolor{cadetblue}     {cmyk}{0.62, 0.57, 0.23, 0   }
\definecolor{cornflowerblue}{cmyk}{0.65, 0.13, 0   , 0   }
\definecolor{midnightblue}  {cmyk}{0.98, 0.13, 0   , 0.43}
\definecolor{navyblue}      {cmyk}{0.94, 0.54, 0   , 0   }
\definecolor{royalblue}     {cmyk}{1   , 0.50, 0   , 0   }
\definecolor{blue}          {cmyk}{1   , 1   , 0   , 0   }
\definecolor{cerulean}      {cmyk}{0.94, 0.11, 0   , 0   }
\definecolor{cyan}          {cmyk}{1   , 0   , 0   , 0   }
\definecolor{processblue}   {cmyk}{0.96, 0   , 0   , 0   }
\definecolor{skyblue}       {cmyk}{0.62, 0   , 0.12, 0   }
\definecolor{turquoise}     {cmyk}{0.85, 0   , 0.20, 0   }
\definecolor{tealblue}      {cmyk}{0.86, 0   , 0.34, 0.02}
\definecolor{aquamarine}    {cmyk}{0.82, 0   , 0.30, 0   }
\definecolor{bluegreen}     {cmyk}{0.85, 0   , 0.33, 0   }
\definecolor{emerald}       {cmyk}{1   , 0   , 0.50, 0   }
\definecolor{junglegreen}   {cmyk}{0.99, 0   , 0.52, 0   }
\definecolor{seagreen}      {cmyk}{0.69, 0   , 0.50, 0   }
\definecolor{green}         {cmyk}{1   , 0   , 1   , 0   }
\definecolor{forestgreen}   {cmyk}{0.91, 0   , 0.88, 0.12}
\definecolor{pinegreen}     {cmyk}{0.92, 0   , 0.59, 0.25}
\definecolor{limegreen}     {cmyk}{0.50, 0   , 1   , 0   }
\definecolor{yellowgreen}   {cmyk}{0.44, 0   , 0.74, 0   }
\definecolor{springgreen}   {cmyk}{0.26, 0   , 0.76, 0   }
\definecolor{olivegreen}    {cmyk}{0.64, 0   , 0.95, 0.40}
\definecolor{rawsienna}     {cmyk}{0   , 0.72, 1   , 0.45}
\definecolor{sepia}         {cmyk}{0   , 0.83, 1   , 0.70}
\definecolor{brown}         {cmyk}{0   , 0.81, 1   , 0.60}
\definecolor{tan}           {cmyk}{0.14, 0.42, 0.56, 0   }
\definecolor{gray}          {cmyk}{0   , 0   , 0   , 0.50}
\definecolor{black}         {cmyk}{0   , 0   , 0   , 1   }
\definecolor{white}         {cmyk}{0   , 0   , 0   , 0   } 
\pgfplotsset{compat=newest}       
\newcommand{\externaltikz}[2]{\includegraphics{Externals/#1}}
\newtheorem{theorem}{Theorem}[section]
\newtheorem{definition}[theorem]{Definition}
\newtheorem{remark}[theorem]{Remark}
\newcounter{tikzsubfigcounter}[figure]
\renewcommand{\thetikzsubfigcounter}{\the\numexpr\value{figure}+1\relax\alph{tikzsubfigcounter}}
\newcounter{tikzsubfigcounterinvisible}[figure]
\renewcommand{\thetikzsubfigcounterinvisible}{\the\numexpr\value{figure}+1\relax\alph{tikzsubfigcounterinvisible}}
\newcommand{\settikzlabel}[1]{ %
\refstepcounter{tikzsubfigcounterinvisible} \label{#1} 
}
\numberwithin{equation}{section}
\title{Second-order mixed-moment model with differentiable ansatz function in slab geometry}
\author[fs]{Florian Schneider}
\address[fs]{Fachbereich Mathematik, TU Kaiserslautern, Erwin-Schr\"odinger-Str., 67663 Kaiserslautern, Germany, {\tt schneider@mathematik.uni-kl.de}}
\date{}
\newlength{\figureheight}
\newlength{\figurewidth}
\tikzstyle arrowstyle=[scale=1]
\tikzstyle directed=[postaction={decorate,decoration={markings,
		mark=at position .65 with {\arrow[arrowstyle]{stealth}}}}]
\tikzstyle reverse directed=[postaction={decorate,decoration={markings,
		mark=at position .65 with {\arrowreversed[arrowstyle]{stealth};}}}]
\newcommand{\figref}[1]{Figure~\ref{#1}}
\newcommand{\abs}[1]{\ensuremath{\left| #1 \right|}}
\newcommand{\R}{\mathbb{R}}
\newcommand{\Rpos}{\R_{\geq 0}}
\newcommand{\scattering}{\ensuremath{\sigma_s}}
\newcommand{\absorption}{\ensuremath{\sigma_a}}
\newcommand{\source}{\ensuremath{Q}}
\newcommand{\sphere}[1][2]{\ensuremath{\mathcal{S}^{#1}}}
\newcommand{\distribution}[1][ ]{\ensuremath{\psi_{#1}}}
\newcommand{\distributiontzero}{\ensuremath{\distribution[\timevar=0]}}
\newcommand{\distributionboundary}{\ensuremath{\distribution[b]}}
\newcommand{\distributionvacuum}{\ensuremath{\distribution[\text{vac}]}}
\newcommand{\ansatz}[1][ ]{\ensuremath{\hat{\psi}_{#1}}}
\newcommand{\momentorder}{\ensuremath{N}}
\newcommand{\momentnumber}{\ensuremath{n}}
\newcommand{\basis}[1][ ]{{\ensuremath{\bb_{#1}}}} 
\newcommand{\basisind}{\ensuremath{i}} 
\newcommand{\basiscomp}[1][\basisind]{\ensuremath{b_{#1}}} 
\newcommand{\SHl}{{\ensuremath{l}}} 
\newcommand{\moments}[1][ ]{\ensuremath{\bu_{#1}}} 
\newcommand{\momentcomp}[1]{\ensuremath{u_{#1}}} 
\newcommand{\normalizedmoments}[1][ ]{\ensuremath{\bsphi_{#1}}} 
\newcommand{\normalizedmomentcomp}[1]{\ensuremath{\phi_{#1}}} 
\newcommand{\normalizedisotropicmoment}{\normalizedmoments[\text{iso}]}
\newcommand{\multipliers}[1][ ]{\ensuremath{\bsalpha_{#1}}} 
\newcommand{\SCheight}{\ensuremath{\mu}} 
\newcommand{\Domain}{\ensuremath{X}} 
\newcommand{\timeint}{\ensuremath{T}} 
\newcommand{\tf}{\ensuremath{t_f}} 
\newcommand{\timevar}{\ensuremath{t}} 
\newcommand{\ints}[1]{\ensuremath{\left<#1\right>}}
\newcommand{\intA}[2]{\ensuremath{\left<#1\right>_{#2}}}
\newcommand{\collisionop}{\ensuremath{\cC}}
\newcommand{\collision}[1]{\ensuremath{\collisionop\left(#1\right)}}
\newcommand{\dirac}{\ensuremath{\delta}}
\newcommand{\indicator}[1]{\ensuremath{\mathbbm{1}_{#1}}}
\newcommand{\Lp}[1]{\ensuremath{L_{#1}}}
\newcommand{\RD}[2]{\ensuremath{\mathcal{R}_{#1}^{#2}}}
\newcommand{\RDone}[1]{\left. \RD{#1}{} \right|_{\density = 1}}
\newcommand{\dRDone}[1]{\left. \partial\RD{#1}{} \right|_{\density = 1}}
\newcommand{\AnsatzSpace}{\ensuremath{\cA}}
\newcommand{\PN}[1][\momentorder]{\ensuremath{\text{P}_{#1}}}
\newcommand{\MN}[1][\momentorder]{\ensuremath{\text{M}_{#1}}}
\newcommand{\MMN}[1][\momentorder]{\ensuremath{\text{MM}_{#1}}}
\newcommand{\DMMN}[1][\momentorder]{\ensuremath{\text{DMM}_{#1}}}
\newcommand{\SPN}[1][\momentorder]{\ensuremath{\text{SP}_{#1}}}
\newcommand{\Flux}{\ensuremath{\bF}}
\newcommand{\Source}{\ensuremath{\bs}}
\newcommand{\optJacobian}{\ensuremath{\bJ}}
\newcommand{\optHessian}{\ensuremath{\bH}}
\newcommand{\eigenvalue}{\ensuremath{\lambda}}
\DeclareMathOperator*{\argmin}{argmin}
\newcommand{\ld}[1]{\ensuremath{{#1}_*}} 
\newcommand{\entropy}{\ensuremath{\eta}} 
\newcommand{\entropyFunctional}{\ensuremath{\mathcal{H}}} 
\def\quand{\quad \mbox{and} \quad}
\newcommand{\x}{\ensuremath{x}}
\newcommand{\y}{\ensuremath{y}}
\newcommand{\z}{\x}
\newcommand{\dx}{\partial_{\x}}
\newcommand{\dz}{\partial_{\z}}
\newcommand{\dt}{\partial_\timevar}
\newcommand{\intp}[1]{\intA{#1}{+}}
\newcommand{\intpm}[1]{\intA{#1}{\pm}}
\newcommand{\intm}[1]{\intA{#1}{-}}
\newcommand{\LaplaceBeltramiProjection}{\ensuremath{\Delta_\SCheight}} 
\newcommand{\regularizationParameter}[1][ ]{\ensuremath{r_{#1}}}
\newcommand{\zL}{\ensuremath{\z_{L}}}
\newcommand{\zR}{\ensuremath{\z_{R}}}
\newcommand{\density}{\ensuremath{\rho}}
\begin{document}

\begin{abstract}
We study differentiable mixed-moment models (full zeroth and first moment, half higher moments) for a Fokker-Planck equation in one space dimension. Mixed-moment minimum-entropy models are known to overcome the zero net-flux problem of full-moment minimum entropy $\MN$ models. Realizability theory for these modification of mixed moments is derived for second order. Numerical tests are performed with a kinetic first-order finite volume scheme and compared with $\MN$, classical $\MMN$ and a $\PN$ reference scheme.
\end{abstract}
\begin{keyword}
moment models \sep minimum entropy \sep Fokker-Planck equation \sep realizability
\MSC[2010] 35L40 \sep 35Q84 \sep 65M08 \sep 65M70 
\end{keyword}
\maketitle

\noindent


\section{Introduction}
%

We investigate time-dependent kinetic transport equations like the Fokker--Planck equation, arising from the Boltzmann equation \cite{cercignani2012boltzmann,Boltzmann1872} under the assumption of extremely forward-peaked scattering \cite{Pom92}. They describe the propagation of ``radiation particles'' like photons or electrons which travel at time $\timevar$ from their current position in a specific direction and how they interact with the surrounding matter. Without any assumptions or dimensional reductions this typically leads to a six- or seven-dimensional state space. Applications reach from electron transport in solids and plasmas, neutron transport in nuclear reactors, photon transport in superfluids and radiative transfer to the context of biological modelling, e.g. for studying cell movement (chemotaxis/haptotaxis) or wolf migration \cite{Hadeler2000,Hillen2013,Chalub2004}.

A common approach to reduce the dimensionality is given by the method of moments \cite{Eddington,Levermore1998}, which is a class of Galerkin methods for the approximation of such time-dependent kinetic transport equations. One chooses a set of angular basis functions, tests the kinetic equation against it and integrates over the angular variable, removing the angular dependence while getting a (potentially huge) system of differential equations in space and time. Well-known examples are the classical $\PN$ methods \cite{Jea17,Eddington,Brunner2005b}, their simplifications, the $\SPN$ \cite{Gel61} methods and entropy minimization $\MN$ models \cite{Min78,DubFeu99,BruHol01,Monreal2008,AllHau12}. Especially the latter is favourable since the moment equations are always closed with a positive ansatz function, respecting the positivity of the kinetic distribution to be approximated. In many situations these models perform very well, but since they result from averaging over the complete velocity space, they can produce physically wrong steady-state shocks. It has been shown by Hauck \cite{Hauck2010} that these shocks exist for every odd order. 

To improve this situation, half- or partial-moment models were introduced in \cite{DubKla02,Frank2006}. These models work especially well in one space dimension since they capture the potential discontinuity of the probability density in the angular variable which in 1D is well-located. Unfortunately, in a Fokker-Planck operator is used instead of the standard integral-scattering operator (BGK type), these half-moment approximations fail significantly. A reason for this is that the domain of definition of the Laplace-Beltrami operator requires continuous functions in one dimension. \cite{Schneider2014}.
 
An intermediate model respecting the continuity of full-moment models while allowing the flexibility of partial moments is the mixed-moment model, which was proposed in \cite{Frank07,Schneider2014,Schneider2015c}. Contrary to a typical half-moment approximation, the lowest order moment (density) is kept as a full moment while all higher moments are half moments. 

Although these $\MMN$ models satisfy the above-mentioned property of having a continuous ansatz function, the numerical discretization of it is highly non-trivial due to the appearance of microscopic terms (i.e. the moments of the Laplace-Beltrami operator depend on the values of the ansatz itself). Especially in multiple dimensions, naive implementations fail at discretizing the (semi-)microscopic quantities (line integrals over quadrant/octant boundaries) \cite{Schneider2016,Schneider2015c}. To overcome this (numerical) problem, we investigate a modification of the mixed-moment model. This new $\DMMN$ model has more regularity, i.e. its ansatz is differentiable, resulting in a more robust numerical implementation while maintaining most of the benefits of the classical $\MMN$ model.

The first part of the paper shortly reviews the method of moments and the minimum-entropy ansatz. Afterwards, the concept of realizability (the fact, that a moment vector is associated with a non-negative distribution function) is introduced and a concrete characterization of the realizable set for the $\DMMN[2]$ model is derived. Furthermore, the eigenstructure of this model is explored. Then, the performance of the new model is investigated in two benchmark tests, showing that the $\DMMN[2]$ is competitive compared to $\MN$ and $\MMN$ models with the same number of degrees of freedom. The paper is concluded by a summary and an outlook on future work.
\section{Models}
In slab geometry, the transport equation under consideration for the particle distribution $\distribution = \distribution(\timevar,\z,\SCheight)$ has the form 
\begin{align}
\label{eq:TransportEquation1D}
\dt\distribution+\SCheight\dz\distribution + \absorption\distribution = \scattering\collision{\distribution}+\source, \qquad \timevar\in\timeint,\z\in\Domain,\SCheight\in[-1,1].
\end{align}
The physical parameters are the absorption and scattering coefficient $\absorption,\scattering:\timeint\times\Domain\to\Rpos$, respectively, and the emitting source $\source:\timeint\times\Domain\times[-1,1]\to\Rpos$. 

Collision of particles is modelled by the Laplace-Beltrami operator $$\collision{\distribution} = \frac12 \LaplaceBeltramiProjection \distribution = \frac12 \cfrac{d}{d\SCheight}\left(\left(1-\SCheight^2\right)\cfrac{d\distribution}{d\SCheight}\right).$$
This operator appears, for example, as the result of an asymptotic analysis of the Boltzmann equation under the assumption of small energy loss and deflection, and forward-peaked scattering in the context of electron transport \cite{Frank07,Pom92,HenIzaSie06}.

The transport equation \eqref{eq:TransportEquation1D} is supplemented by initial and boundary conditions:
\begin{subequations}
\begin{align}
\distribution(0,\z,\SCheight) &= \distributiontzero(\z,\SCheight) &\text{for } \z\in\Domain = (\zL,\zR), \SCheight\in[-1,1], \label{eq:TransportEquation1DIC}\\
\distribution(\timevar,\zL,\SCheight) &= \distributionboundary(\timevar,\zL,\SCheight) &\text{for } \timevar\in\timeint, \SCheight>0,  \label{eq:TransportEquation1DBCa}\\
\distribution(\timevar,\zR,\SCheight) &= \distributionboundary(\timevar,\zR,\SCheight) &\text{for } \timevar\in\timeint, \SCheight<0. \label{eq:TransportEquation1DBCb}
\end{align}
\end{subequations}

In general, solving equation \eqref{eq:TransportEquation1D} is very expensive in two and three dimensions due to the high dimensionality of the state space. 

For this reason it is convenient to use some type of spectral or Galerkin method to transform the high-dimensional equation into a system of lower-dimensional equations. Typically, one chooses to reduce the dimensionality by representing the angular dependence of $\distribution$ in terms of some basis $\basis$.
\begin{definition}
The vector of functions $\basis:[-1,1]\to\R^{\momentnumber}$ consisting of $\momentnumber$ basis functions $\basiscomp[\basisind]$, $\basisind=0,\ldots\momentnumber-1$ of maximal \emph{order} $\momentorder$ is called an \emph{angular basis}.\\
The so-called \emph{moments} of a given distribution function $\distribution$ with respect to $\basis$ are then defined by
\begin{align}
\label{eq:moments}
\moments =\ints{{\basis}\distribution} = \left(\momentcomp{0},\ldots,\momentcomp{\momentnumber-1}\right)^T,
\end{align}
where the integration $\ints{\cdot} := \int\limits_{-1}^1\cdot~d\SCheight$ is performed componentwise.\\
Assuming for simplicity $\basiscomp[0]\equiv 1$, the quantity $\density := \momentcomp{0} = \ints{\basiscomp[0]\distribution}=\ints{\distribution}$ is called \emph{local particle density}. 
Furthermore, \emph{normalized moments} $\normalizedmoments = \left(\normalizedmomentcomp{1},\ldots,\normalizedmomentcomp{\momentnumber-1}\right)\in\R^{\momentnumber-1}$ are defined as 
\begin{align}
\label{eq:NormalizedMoments}
\normalizedmomentcomp{\basisind} = \cfrac{\momentcomp{\basisind}}{\momentcomp{0}}~, \qquad \basisind=1,\ldots\momentnumber-1.
\end{align}
\end{definition}
To obtain a set of equations for $\moments$, \eqref{eq:TransportEquation1D} has to be multiplied through by $\basis$ and integrated over $[-1,1]$, giving
\begin{align*}
\ints{\basis\dt\distribution}+\ints{\basis\dz\SCheight\distribution} + \ints{\basis\absorption\distribution} = \scattering\ints{\basis\collision{\distribution}}+\ints{\basis\source}.
\end{align*}
Collecting known terms, and interchanging integrals and differentiation where possible, the moment system has the form
\begin{align}
\label{eq:MomentSystemUnclosed1D}
\dt\moments+\dz\ints{\SCheight \basis\distribution} + \absorption\moments = \scattering\ints{\basis\collision{\distribution}}+\ints{\basis\source}.
\end{align}

The solution of \eqref{eq:MomentSystemUnclosed1D} is equivalent to the one of \eqref{eq:TransportEquation1D} if $\basis$ is a basis of $\Lp{2}(\sphere,\R)$. 

Since it is impractical to work with an infinite-dimensional system, only a finite number of $\momentnumber<\infty$ basis functions $\basis$ of order $\momentorder$ can be considered. Unfortunately, there always exists an index $\basisind\in\{0,\dots,\momentnumber-1\}$ such that the components of $\basiscomp\cdot\SCheight$ are not in the linear span of $\basis$. Therefore, the flux term cannot be expressed in terms of $\moments$ without additional information. Furthermore, the same might be true for the projection of the scattering operator onto the moment-space given by $\ints{\basis\collision{\distribution}}$. This is the so-called \emph{closure problem}. One usually prescribes some \emph{ansatz} distribution $\ansatz[\moments](\timevar,\x,\SCheight):=\ansatz(\moments(\timevar,\x),\basis(\SCheight))$ to calculate the unknown quantities in \eqref{eq:MomentSystemUnclosed1D}. Note that the dependence on the angular basis in the short-hand notation $\ansatz[\moments]$ is neglected for notational simplicity.\\

In this paper the ansatz density $\ansatz$ is reconstructed from the moments $\moments$ by minimizing the entropy-functional 
 \begin{align}
 \label{eq:entropyFunctional}
 \entropyFunctional(\distribution) = \ints{\entropy(\distribution)}
 \end{align}
 under the moment constraints
 \begin{align}
 \label{eq:MomentConstraints}
 \ints{\basis\distribution} = \moments.
 \end{align}
The kinetic entropy density $\entropy:\R\to\R$ is strictly convex and twice continuously differentiable and the minimum is simply taken over all functions $\distribution = \distribution(\SCheight)$ such that 
  $\entropyFunctional(\distribution)$ is well defined. The obtained ansatz $\ansatz = \ansatz[\moments]$, solving this constrained optimization problem, is given by
 \begin{equation}
  \ansatz[\moments] = \argmin\limits_{\distribution:\entropy(\distribution)\in\Lp{1}}\left\{\ints{\entropy(\distribution)}
  : \ints{\basis \distribution} = \moments \right\}.
 \label{eq:primal}
 \end{equation}
This problem, which must be solved over the space-time mesh, is typically solved through its strictly convex finite-dimensional dual,
 \begin{equation}
  \multipliers(\moments) := \argmin_{\tilde{\multipliers} \in \R^{\momentnumber}} \ints{\ld{\entropy}(\basis^T 
   \tilde{\multipliers})} - \moments^T \tilde{\multipliers},
 \label{eq:dual}
 \end{equation}
where $\ld{\entropy}$ is the Legendre dual of $\entropy$. The first-order necessary conditions for the multipliers $\multipliers(\moments)$ show that the solution to \eqref{eq:primal} has the form
 \begin{equation}
  \ansatz[\moments] = \ld{\entropy}' \left(\basis^T \multipliers(\moments) \right),
 \label{eq:psiME}
 \end{equation}
where $\ld{\entropy}'$ is the derivative of $\ld{\entropy}$.\\

This approach is called the \emph{minimum-entropy closure} \cite{Levermore1996}. The resulting model has many desirable properties: symmetric hyperbolicity, bounded eigenvalues of the directional flux Jacobian and the direct existence of an entropy-entropy flux pair (compare \cite{Levermore1996,Schneider2016}).\\

The kinetic entropy density $\entropy$ can be chosen according to the 
physics being modelled.
As in \cite{Levermore1996,Hauck2010}, Maxwell-Boltzmann entropy%
 \begin{align}
 \label{eq:EntropyM}
  \entropy(\distribution) = \distribution \log(\distribution) - \distribution
 \end{align}
is used, thus $\ld{\entropy}(p) = \ld{\entropy}'(p) = \exp(p)$. This entropy is used for non-interacting particles as in an ideal gas.

Substituting $\distribution$ in \eqref{eq:MomentSystemUnclosed1D} with $\ansatz[\moments]$ yields a closed system of equations for $\moments$:
\begin{align}
\label{eq:MomentSystemClosed}
\dt\moments+\dz\ints{\SCheight \basis\ansatz[\moments]} + \absorption\moments = \scattering\ints{\basis\collision{\ansatz[\moments]}}+\ints{\basis\source}.
\end{align}

For convenience, \eqref{eq:MomentSystemClosed} can be written in the form of a usual first-order hyperbolic system of balance laws
\begin{align}
\label{eq:GeneralHyperbolicSystem}
\dt\moments+\dx\Flux\left(\moments\right) = \Source\left(\moments\right),
\end{align}
where 
\begin{subequations}
\label{eq:FluxDefinitions}
\begin{align}
\Flux\left(\moments\right) &= \ints{\SCheight\basis\ansatz[\moments]}\in\R^{\momentnumber},\\
\Source\left(\moments\right) &= \scattering\ints{\basis\collision{\ansatz[\moments]}}+\ints{\basis\source}-\absorption\moments.
\end{align}
\end{subequations}

In this paper, a variant of the so-called mixed-moment basis \cite{Schneider2014,Frank07} is used. This ansatz is a combination of the \emph{full-moment} ($\basiscomp = \SCheight^\basisind$) and \emph{half-moment monomial basis} ($\basiscomp = \indicator{[-1,0]}\SCheight^\basisind$ or $\basiscomp = \indicator{[0,1]}\SCheight^\basisind$) \cite{DubKla02,DubFraKlaTho03}. The classical mixed-moment basis consists of a full zeroth moment and half moments for every higher moment. The resulting ansatz \eqref{eq:psiME} is continuous but not continuously differentiable in $\SCheight = 0$, leading to a microscopic term of the form $\ansatz[\moments](0)$ in the scattering term $\ints{\basis \LaplaceBeltramiProjection\ansatz[\moments]}$ \cite{Frank07,Schneider2014}. While this can be treated easily in one dimension, discretization problems arise in higher dimensions, where the microscopic quantity has to be replaced by an integration over a spherical arc of the unit sphere \cite{Schneider2016,Schneider2015c}.

For this reason, we modify the mixed-moment basis in such a way that the ansatz is differentiable in $\SCheight=0$, removing the microscopic quantity. In one dimension, it suffices to choose a full zeroth and first moment to obtain the desired regularity. The corresponding moments have the form
\begin{align*}
\momentcomp{\basisind} &= \ints{\SCheight^\basisind\distribution} =: \ints{\basiscomp[\basisind]\distribution}, &\basisind\in\{0,1\},\\
\momentcomp{\basisind\pm} &= \intpm{\SCheight^\basisind\distribution} =: \ints{\basiscomp[\basisind\pm]\distribution}, &\basisind\geq 2,\\
\normalizedmomentcomp{1} &= \frac{\momentcomp{1}}{\momentcomp{0}}, &\\
\normalizedmomentcomp{\basisind\pm} &= \frac{\momentcomp{\basisind\pm}}{\momentcomp{0}}, &\basisind\geq 2,
\end{align*}
where $\intp{\cdot} = \int\limits_{0}^1\cdot~d\SCheight$ and $\intm{\cdot} = \int\limits_{-1}^0\cdot~d\SCheight$ denote integration over the halfspaces. Accordingly, the angular basis has the form $\basis = \left(1,\SCheight,\indicator{[0,1]}\SCheight^2,\ldots,\indicator{[0,1]}\SCheight^\momentorder,\indicator{[-1,0]}\SCheight^2,\ldots,\indicator{[-1,0]}\SCheight^\momentorder\right)^T = \left(\basiscomp[0],\basiscomp[1],\basiscomp[2+],\ldots,\basiscomp[\momentorder+],\basiscomp[2-],\ldots,\basiscomp[\momentorder-]\right)^T$.

Using this basis, it holds that
\begin{align}
\ints{\basiscomp[0]\LaplaceBeltramiProjection \distribution} &= 0,\nonumber\\
\ints{\basiscomp[1]\LaplaceBeltramiProjection \distribution} &= -2\momentcomp{1},\label{eq:LaplaceBeltramiMM1D}\\
\ints{\basiscomp[\SHl\pm]\LaplaceBeltramiProjection \distribution} &= -\SHl(\SHl+1)\momentcomp{\SHl\pm}+\SHl(\SHl-1)\momentcomp{(\SHl-2)\pm},\qquad \SHl\in\{2,\ldots,\momentorder\}.\nonumber
\end{align}
Note that for $\SHl = 2$ and $\SHl=3$ the quantities $\momentcomp{0\pm} = \intpm{\distribution}$ and $\momentcomp{1\pm} = \intpm{\SCheight\distribution}$, respectively, appear, which have to be determined using the closure relation \eqref{eq:psiME}.

\begin{definition}
The \textbf{classical mixed-moment model} will be referred to as the $\MMN$ model, while the \textbf{differentiable mixed-moment model} will be called the $\DMMN$ model.
\end{definition}

\begin{figure}[h!]
\externaltikz{Ansatz}{\relinput{Images/Ansatz}}
\centering
\caption{Two ansatz functions and their derivatives for the $\MMN[2]$ and $\DMMN[2]$ model, respectively.\\
\textbf{Left}: $\ansatz(\SCheight) = \exp\left(\left(\SCheight-\SCheight^2\right)\indicator{[0,1]}-2\SCheight\indicator{[-1,0]}\right)$,\qquad
\textbf{Right}: $\ansatz(\SCheight) = \exp\left(-2\SCheight-\SCheight^2\indicator{[0,1]}\right)$
}
 \label{fig:Ansatz}
\end{figure}

\figref{fig:Ansatz} shows typical ansatz functions $\ansatz$ for the $\MMN[2]$ and $\DMMN[2]$ model. It can be seen that the $\MMN[2]$ ansatz is only continuous, while the $\DMMN[2]$ ansatz is also continuously differentiable in $\SCheight$.
\section{Realizability}

Since the underlying kinetic density to be approximated is
non-negative, a 
moment vector only makes sense physically if it can be associated with a 
non-negative distribution function. In this case the moment vector is called 
\emph{realizable}.

\begin{definition}
\label{def:RealizableSet}
The \emph{realizable set} $\RD{\basis}{}$\index{Realizability@\textbf{Realizability}!Realizable set $\RD{\basis}{}$} is 
$$
\RD{\basis}{} = \left\{\moments~:~\exists \distribution(\SCheight)\ge 0,\, \density = \ints{\distribution} > 0,
 \text{ such that } \moments =\ints{\basis\distribution} \right\}.
$$
If $\moments\in\RD{\basis}{}$, then $\moments$ is called \emph{realizable}.
Any $\distribution$ such that $\moments =\ints{\basis \distribution}$ is called a \emph{representing 
density}. 
\end{definition}
\begin{remark}
\mbox{ }
\begin{enumerate}[(a)]
\item The realizable set is a convex cone, and
\item Representing densities are not necessarily unique.
\end{enumerate}
\end{remark}

Additionally, since the entropy ansatz has the form \eqref{eq:psiME}, in the 
Maxwell-Boltzmann case, the optimization problem \eqref{eq:primal} only has a 
solution if the moment vector lies in the ansatz space
$$
 \AnsatzSpace := \left\{\ints{\basis \ansatz[\moments]}\stackrel{\eqref{eq:psiME}}{=} \ints{\basis \ld{\entropy}'\left(\basis^T\multipliers\right) }
  : \multipliers \in \R^{\momentnumber}  \right\}.
$$
In the case of a bounded angular domain, the ansatz 
space $\AnsatzSpace$ is equal to the set of realizable moment vectors 
\cite{Jun00}. Therefore, it is sufficient to focus on realizable moments only.

Unfortunately, the definition of the realizable set is not constructive, making it hard to check if a moment vector is realizable or not. Therefore, other characterizations of $\RD{\basis}{}$ are necessary.

For example, in the classical mixed-moment problem of first order, the realizable set is characterized by the inequalities \cite{Frank07,Schneider2014}
\begin{align*}
\momentcomp{1+}-\momentcomp{1-}\leq \momentcomp{0} \quand \pm \momentcomp{1\pm}\geq 0.
\end{align*}

In this paper, we want to focus on the lowest-order non-trivial model of the differentiable mixed-moment hierarchy, i.e. $\momentorder = 2$.

\begin{theorem}
The moment vector $\moments = \left(\momentcomp{0},\momentcomp{1},\momentcomp{2+},\momentcomp{2-}\right)\in\R^4$ is realizable, i.e. $\moments\in\RD{\basis}{}$, if and only if
\begin{align}
\label{eq:Realizability1D}
\momentcomp{2+} - \sqrt{\momentcomp{2-}\, \left(\momentcomp{0}-\momentcomp{2+}\right)} &\leq \momentcomp{1} \leq \sqrt{\momentcomp{2+}\, \left(\momentcomp{0}-\momentcomp{2-}\right)} - \momentcomp{2-},\\
\momentcomp{0},\momentcomp{2\pm}\geq 0. \label{eq:Realizability1Db}
\end{align}
\end{theorem}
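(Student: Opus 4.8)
The plan is to reduce realizability of $\moments$ on $[-1,1]$ to two decoupled \emph{half-space} truncated moment problems and then to eliminate the two auxiliary half-space moments that do not appear in $\moments$. Given a representing density $\distribution\ge 0$, write $\distribution=\distribution\indicator{[0,1]}+\distribution\indicator{[-1,0]}$ and put $\momentcomp{0\pm}=\intpm{\distribution}$, $\momentcomp{1\pm}=\intpm{\SCheight\distribution}$, so that $\momentcomp{0+}+\momentcomp{0-}=\momentcomp{0}$ and $\momentcomp{1+}+\momentcomp{1-}=\momentcomp{1}$ while $\momentcomp{2\pm}$ are already half-space second moments. Then $\moments\in\RD{\basis}{}$ if and only if there is a splitting $(\momentcomp{0+},\momentcomp{1+})$ (with $\momentcomp{0-},\momentcomp{1-}$ then determined) for which $(\momentcomp{0+},\momentcomp{1+},\momentcomp{2+})$ is the moment vector of a non-negative measure on $[0,1]$ and $(\momentcomp{0-},\momentcomp{1-},\momentcomp{2-})$ is one on $[-1,0]$. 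Since $\RD{\basis}{}$ is a cone one may normalize $\momentcomp{0}=1$ (the case $\momentcomp{0}=0$ being trivial); this is exactly the strategy that yields the known $\MMN[1]$ characterization.

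The first ingredient is the order-two half-space moment problem: a triple $(m_0,m_1,m_2)$ (relative to $1,\SCheight,\SCheight^2$) is realizable on $[0,1]$ if and only if $m_0\ge 0$, $m_2\ge 0$, $m_1^2\le m_0m_2$ and $m_2\le m_1$, in which case $m_0\ge m_1\ge m_2\ge 0$ follows. Necessity is Cauchy--Schwarz together with $m_1-m_2=\ints{\SCheight(1-\SCheight)\distribution}\ge 0$; sufficiency holds because, after normalizing $m_0=1$, these inequalities say precisely that $(m_1,m_2)$ lies in the convex hull of the moment curve $\{(\SCheight,\SCheight^2):\SCheight\in[0,1]\}$ — the region between the parabola $y=x^2$ and the chord $y=x$ — hence is represented by a measure with at most two atoms. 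The statement on $[-1,0]$ follows via $\SCheight\mapsto-\SCheight$, with $m_2\le m_1$ replaced by $m_2\le -m_1$.

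Next comes the elimination. Setting $a:=\momentcomp{0+}$, the constraints $\momentcomp{0\pm}\ge\momentcomp{2\pm}$ force $a\in[\momentcomp{2+},\,\momentcomp{0}-\momentcomp{2-}]$, so this interval must be nonempty, i.e. $\momentcomp{2+}+\momentcomp{2-}\le\momentcomp{0}$. For fixed such $a$, the half-space conditions on $\momentcomp{1+}$ (with $\momentcomp{1-}=\momentcomp{1}-\momentcomp{1+}$) read $\momentcomp{2+}\le\momentcomp{1+}\le\sqrt{a\momentcomp{2+}}$ and $\momentcomp{1}+\momentcomp{2-}\le\momentcomp{1+}\le\momentcomp{1}+\sqrt{(\momentcomp{0}-a)\momentcomp{2-}}$; an admissible $\momentcomp{1+}$ exists iff the four endpoint comparisons hold, and two of them hold automatically because $a\ge\momentcomp{2+}$ and $\momentcomp{0}-a\ge\momentcomp{2-}$, leaving
$$L(a):=\momentcomp{2+}-\sqrt{(\momentcomp{0}-a)\momentcomp{2-}}\ \le\ \momentcomp{1}\ \le\ \sqrt{a\momentcomp{2+}}-\momentcomp{2-}=:R(a).$$
Both $L$ and $R$ are continuous and strictly increasing in $a$, and $L(a)\le R(a)$ throughout the admissible range since $\sqrt{a\momentcomp{2+}}\ge\momentcomp{2+}$ and $\sqrt{(\momentcomp{0}-a)\momentcomp{2-}}\ge\momentcomp{2-}$. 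Therefore $\bigcup_a[L(a),R(a)]$ is the single interval $[L(\momentcomp{2+}),R(\momentcomp{0}-\momentcomp{2-})]$ (a short intermediate-value argument using $L\le R$), whose endpoints $L(\momentcomp{2+})=\momentcomp{2+}-\sqrt{\momentcomp{2-}(\momentcomp{0}-\momentcomp{2+})}$ and $R(\momentcomp{0}-\momentcomp{2-})=\sqrt{\momentcomp{2+}(\momentcomp{0}-\momentcomp{2-})}-\momentcomp{2-}$ are exactly the two sides of \eqref{eq:Realizability1D}. Hence $\moments$ is realizable iff $\momentcomp{0},\momentcomp{2\pm}\ge 0$, $\momentcomp{2+}+\momentcomp{2-}\le\momentcomp{0}$, and \eqref{eq:Realizability1D} holds.

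The remaining step — which I expect to be the only genuinely technical obstacle — is to see that the side condition $\momentcomp{2+}+\momentcomp{2-}\le\momentcomp{0}$ is already implied by \eqref{eq:Realizability1D} and \eqref{eq:Realizability1Db}. Under $\momentcomp{2\pm}\ge 0$ (and with the radicands in \eqref{eq:Realizability1D} real, which itself forces $\momentcomp{0}\ge\momentcomp{2+}$ and $\momentcomp{0}\ge\momentcomp{2-}$), the nonemptiness of \eqref{eq:Realizability1D}, i.e. $\momentcomp{2+}+\momentcomp{2-}\le\sqrt{\momentcomp{2+}(\momentcomp{0}-\momentcomp{2-})}+\sqrt{\momentcomp{2-}(\momentcomp{0}-\momentcomp{2+})}$, is equivalent to $\momentcomp{2+}+\momentcomp{2-}\le\momentcomp{0}$: squaring, the difference of the squares of the two sides equals $(\momentcomp{2+}+\momentcomp{2-})(\momentcomp{0}-\momentcomp{2+}-\momentcomp{2-})-2\momentcomp{2+}\momentcomp{2-}+2\sqrt{\momentcomp{2+}\momentcomp{2-}(\momentcomp{0}-\momentcomp{2+})(\momentcomp{0}-\momentcomp{2-})}$, and since $(\momentcomp{0}-\momentcomp{2+})(\momentcomp{0}-\momentcomp{2-})-\momentcomp{2+}\momentcomp{2-}=\momentcomp{0}(\momentcomp{0}-\momentcomp{2+}-\momentcomp{2-})$ this expression has the same sign as $\momentcomp{0}-\momentcomp{2+}-\momentcomp{2-}$. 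Combining the three steps gives the stated equivalence; the non-strict inequalities correspond, as usual, to boundary points of the closed realizable cone, represented by at most two atoms in each half-space.
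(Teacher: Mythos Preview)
Your argument is correct and follows a genuinely different route from the paper's proof. The paper proceeds in two separate steps: for \emph{necessity} it fixes a representing $\distribution\ge 0$, invokes the half-moment bounds $\momentcomp{0\pm}\momentcomp{2\pm}\ge\momentcomp{1\pm}^2$ and $\momentcomp{0\pm}\ge\pm\momentcomp{1\pm}\ge\momentcomp{2\pm}$, and chains them directly into \eqref{eq:Realizability1D}; for \emph{sufficiency} it writes down an explicit two-atom measure (one Dirac in each half-interval, with weights and locations expressed through auxiliary quantities $\normalizedmomentcomp{1\pm}$) and then verifies that the atoms land in $[-1,1]$ by checking the half-moment conditions on the boundary of \eqref{eq:Realizability1D} and appealing to convexity of the realizable set for the interior. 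You instead treat both directions at once by parametrizing the hidden split via $a=\momentcomp{0+}$, reducing to the order-two Hausdorff problem on each half-interval, and then eliminating $a$ and $\momentcomp{1+}$ by computing $\min_a L(a)$ and $\max_a R(a)$.

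Your approach is more systematic and explains \emph{why} the bounds in \eqref{eq:Realizability1D} take the form they do (they are the extremal values of the monotone families $L,R$ at the endpoints $a=\momentcomp{2+}$ and $a=\momentcomp{0}-\momentcomp{2-}$), and it suggests a template for higher $\momentorder$. The paper's approach, on the other hand, produces a concrete atomic representing measure, which is valuable in its own right (e.g.\ for Kershaw-type closures). Two small remarks: your claim that $L,R$ are \emph{strictly} increasing fails when $\momentcomp{2-}=0$ or $\momentcomp{2+}=0$, but monotonicity (non-strict) suffices for the union argument; and your final squaring computation showing that nonemptiness of \eqref{eq:Realizability1D} forces $\momentcomp{2+}+\momentcomp{2-}\le\momentcomp{0}$ is exactly the redundancy the paper leaves implicit when it notes that reality of the radicands already gives $\normalizedmomentcomp{2\pm}\in[0,1]$.
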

\begin{proof}
At first, we want to show that \eqref{eq:Realizability1D} and \eqref{eq:Realizability1Db} are necessary. Assume that $\distribution\geq 0$ is arbitrary but fixed and $\moments = \ints{\basis\distribution}$. Note that $\momentcomp{0\pm}\geq \pm\momentcomp{1\pm}\geq \momentcomp{2\pm}\geq 0$ and $\momentcomp{0\pm}\momentcomp{2\pm}\geq \momentcomp{1\pm}^2$ due to the half-moment realizability conditions \cite{Schneider2014,Curto1991}. Then we have (using $\momentcomp{0-}+\momentcomp{0+} = \momentcomp{0}$) that 
\begin{align*}
\momentcomp{2-}\, \left(\momentcomp{0}-\momentcomp{2+}\right) = \momentcomp{2-}\momentcomp{0-}+ \momentcomp{2-}\left(\momentcomp{0+}-\momentcomp{2+}\right) \geq \momentcomp{1-}^2 + \momentcomp{2-}\left(\momentcomp{0+}-\momentcomp{2+}\right) \geq \momentcomp{1-}^2.
\end{align*}
Since $\momentcomp{1-}\leq 0$ it follows that 
\begin{gather*}
\sqrt{\momentcomp{2-}\, \left(\momentcomp{0}-\momentcomp{2+}\right) } \geq \abs{\momentcomp{1-}} = -\momentcomp{1-}
\quad \Longleftrightarrow\quad
-\sqrt{\momentcomp{2-}\, \left(\momentcomp{0}-\momentcomp{2+}\right) } \leq \momentcomp{1-}.
\end{gather*}
Therefore 
\begin{align*}
\momentcomp{2+} - \sqrt{\momentcomp{2-}\, \left(\momentcomp{0}-\momentcomp{2+}\right)} \leq \momentcomp{2+}+\momentcomp{1-} \leq \momentcomp{1+}+\momentcomp{1-} = \momentcomp{1}.
\end{align*}
The upper bound can be shown to be necessary in a similar way.\\
\eqref{eq:Realizability1Db} follows from the positivity of $1$ and $\SCheight^2$. We want to remark that the standard second-order full-moment realizability condition for $\momentcomp{2} = \momentcomp{2+}+\momentcomp{2-}$, namely $\momentcomp{0}\left(\momentcomp{2+}+\momentcomp{2-}\right)\geq \momentcomp{1}^2$, is implied by \eqref{eq:Realizability1D} and \eqref{eq:Realizability1Db}. 

To show that the above inequalities are also sufficient, we provide a non-negative realizing distribution with support in $[-1,1]$:
\begin{align*}
\distribution = \momentcomp{0}\left(\cfrac{\normalizedmomentcomp{1+}^2}{\normalizedmomentcomp{2+}}\cdot\dirac\left(\SCheight-\cfrac{\normalizedmomentcomp{2+}}{\normalizedmomentcomp{1+}}\right)+\cfrac{\normalizedmomentcomp{1-}^2}{\normalizedmomentcomp{2-}}\cdot\dirac\left(\SCheight-\cfrac{\normalizedmomentcomp{2-}}{\normalizedmomentcomp{1-}}\right)\right),
\end{align*}
with
\begin{align*}
\normalizedmomentcomp{1+} = \frac{\normalizedmomentcomp{2+}\, \left(\normalizedmomentcomp{1} + \normalizedmomentcomp{2-}\, \sqrt{\frac{ - {\normalizedmomentcomp{1}}^2 + \normalizedmomentcomp{2-} + \normalizedmomentcomp{2+}}{\normalizedmomentcomp{2-}\, \normalizedmomentcomp{2+}}}\right)}{\normalizedmomentcomp{2-} + \normalizedmomentcomp{2+}}\\
\normalizedmomentcomp{1-} = \frac{\normalizedmomentcomp{2-}\, \left(\normalizedmomentcomp{1} - \normalizedmomentcomp{2+}\, \sqrt{\frac{ - {\normalizedmomentcomp{1}}^2 + \normalizedmomentcomp{2-} + \normalizedmomentcomp{2+}}{\normalizedmomentcomp{2-}\, \normalizedmomentcomp{2+}}}\right)}{\normalizedmomentcomp{2-} + \normalizedmomentcomp{2+}}
\end{align*}
and $\normalizedmomentcomp{1+}=\normalizedmomentcomp{1-} = 0$ if $\normalizedmomentcomp{2+}=\normalizedmomentcomp{2-} = 0$. In this case, $\distribution = \momentcomp{0}\dirac(\SCheight)$ (due to the quadratic term the second moment vanishes faster than the first moment so we have $\frac{\normalizedmomentcomp{2\pm}}{\normalizedmomentcomp{1\pm}}\to 0$ and $\frac{\normalizedmomentcomp{1\pm}^2}{\normalizedmomentcomp{2\pm}}\to \frac{\normalizedmomentcomp{2\mp}-\normalizedmomentcomp{1}^2}{\normalizedmomentcomp{2\mp}}$). It is simple to check that $\normalizedmomentcomp{1+}+\normalizedmomentcomp{1-} = \normalizedmomentcomp{1}$ and $\cfrac{\normalizedmomentcomp{1+}^2}{\normalizedmomentcomp{2+}}+\cfrac{\normalizedmomentcomp{1-}^2}{\normalizedmomentcomp{2-}} = 1$, i.e. all moments are correctly represented. It remains to show that under \eqref{eq:Realizability1D} we have that $\cfrac{\normalizedmomentcomp{2+}}{\normalizedmomentcomp{1+}}\in[0,1]$ and $\cfrac{\normalizedmomentcomp{2-}}{\normalizedmomentcomp{1-}}\in[-1,0]$, i.e. 
\begin{align*}
0 \leq \normalizedmomentcomp{2\pm} \leq \pm\normalizedmomentcomp{1\pm}.
\end{align*}
This corresponds to the standard half-moment realizability conditions of second order. 
We first note that \eqref{eq:Realizability1D} and \eqref{eq:Realizability1Db} imply that $\normalizedmomentcomp{2\pm}\in [0,1]$ since otherwise the bounds become complex. Second, we have that $\normalizedmomentcomp{2+} - \sqrt{\normalizedmomentcomp{2-}\, \left(1-\normalizedmomentcomp{2+}\right)} = \sqrt{\normalizedmomentcomp{2+}\, \left(1-\normalizedmomentcomp{2-}\right)} - \normalizedmomentcomp{2-}$ if and only if $\normalizedmomentcomp{2+} = 1-\normalizedmomentcomp{2-}$ or $\normalizedmomentcomp{2+} = \normalizedmomentcomp{2-} = 0$, implying the classical full-moment realizability conditions $\normalizedmomentcomp{1}\in[-1,1]$ and $\normalizedmomentcomp{2}\leq 1$.

We start the investigation at the different parts of the realizability boundary. \\
Let $\normalizedmomentcomp{1} = \sqrt{\normalizedmomentcomp{2+}\, \left(1-\normalizedmomentcomp{2-}\right)} - \normalizedmomentcomp{2-}$. Plugging this into the definition of $\normalizedmomentcomp{1+}$ we get that, after some elementary transformations,
\begin{align*}
\normalizedmomentcomp{1+} &\stackrel{\phantom{1\geq\normalizedmomentcomp{2+}+\normalizedmomentcomp{2-}}}{\geq} \cfrac{\normalizedmomentcomp{2+}}{\normalizedmomentcomp{2+}+\normalizedmomentcomp{2-}}\left(\sqrt{1-\normalizedmomentcomp{2-}}\left(\sqrt{\normalizedmomentcomp{2+}}+\cfrac{\normalizedmomentcomp{2-}}{\sqrt{\normalizedmomentcomp{2+}}}\right)\right)\\
&\stackrel{1\geq\normalizedmomentcomp{2+}+\normalizedmomentcomp{2-}}{\geq} \cfrac{\normalizedmomentcomp{2+}}{\normalizedmomentcomp{2+}+\normalizedmomentcomp{2-}}\left(\sqrt{\normalizedmomentcomp{2+}}\left(\sqrt{\normalizedmomentcomp{2+}}+\cfrac{\normalizedmomentcomp{2-}}{\sqrt{\normalizedmomentcomp{2+}}}\right)\right) = \normalizedmomentcomp{2+}.
\end{align*}
Similarly, we obtain $-\normalizedmomentcomp{1-}\geq \normalizedmomentcomp{2-}$ and the same in the case $\normalizedmomentcomp{1} = \normalizedmomentcomp{2+} - \sqrt{\normalizedmomentcomp{2-}\, \left(1-\normalizedmomentcomp{2+}\right)}$.

%
%

Since the realizable set is always convex, the argumentation must also hold in the interior of the above set.

\end{proof}

\begin{figure}[h!]
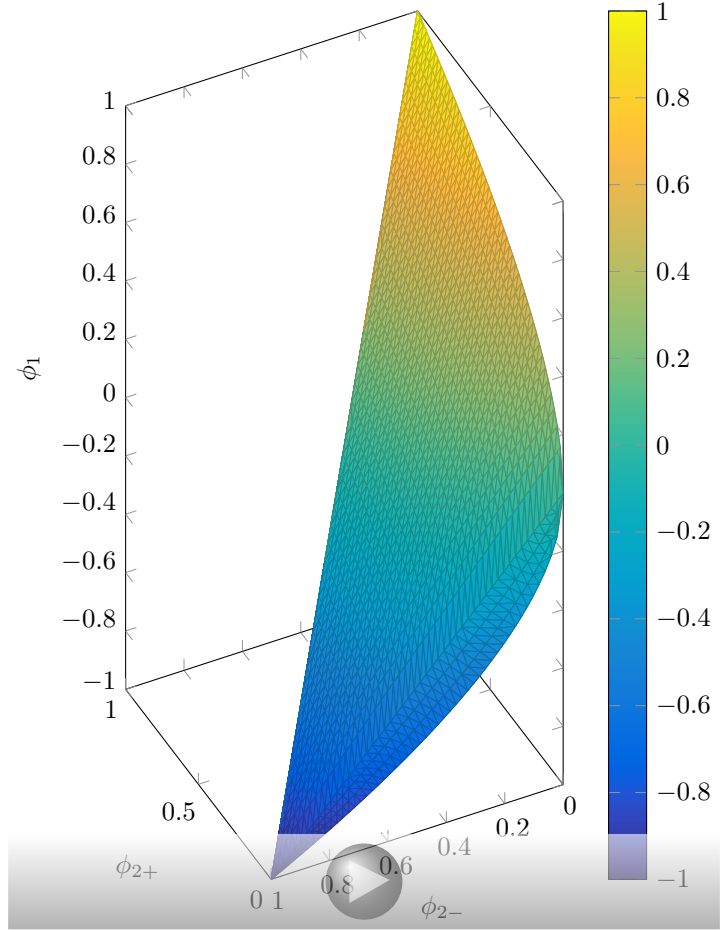


\begin{center}
\includemedia[
label=RealizableSet,
activate=pageopen,
3Dtoolbar,
3Dviews=Images/RealizableSet.vws,
]
{\externaltikz{Realizability}{\relinput{Images/Realizability}}}{Images/RealizableSet.u3d}
\end{center}
\caption{The normalized realizable set for the differentiable mixed-moment basis of order $\momentorder=2$.\\
\textbf{Online version}: Press to activate 3D view ($\x$-axis (red): $\normalizedmomentcomp{2+}$, $\y$-axis (green): $\normalizedmomentcomp{2-}$, $z$-axis (blue): $\normalizedmomentcomp{1}$)}
 \label{fig:Realizability}
\end{figure}
The \emph{normalized realizable set}
\begin{align*}
\RDone{\basis} = \left\{\moments\in \RD{\basis}{}~|~\density = 1 \right\}
\end{align*}
of the $\DMMN[2]$ model, defined by \eqref{eq:Realizability1D} and \eqref{eq:Realizability1Db}, is shown in \figref{fig:Realizability}.

\begin{remark}
\eqref{eq:Realizability1D} gives a surprising insight into realizability of mixed-moment models. While the realizable set for full-moment and classical mixed-moment models can be characterized by inequalities with rational functions of the moments, the differential mixed-moment model requires non-linearities. This implies that it might be impossible to transfer the general mixed-moment structure (which uses a linearity argument) shown in \cite{Schneider2014} to the differentiable case. 
\end{remark}

\section{Eigenstructure of the $\DMMN[2]$ model}
It is well known that the moment system \eqref{eq:MomentSystemClosed} admits desirable properties like symmetric hyperbolicity, boundedness of the characteristic velocities (eigenvalues of the flux Jacobian) and the existence of an entropy-entropy flux pair \cite{Levermore1998,Schneider2015a,Schneider2016,Levermore1996}. Furthermore, the eigenvalues only depend on the normalized moments $\normalizedmoments$.

If we define $\optJacobian(\multipliers) := \ints{\SCheight \basis \basis^T \ld{\entropy}''(\basis^T 
\multipliers)}$ and $\optHessian(\multipliers):= \ints{\basis \basis^T \ld{\entropy}''(\basis^T 
\multipliers)}$, the flux Jacobian of \eqref{eq:GeneralHyperbolicSystem} has the form \cite{Schneider2015a,Schneider2016,Levermore1996}
\begin{equation}
 \frac{\partial \Flux(\moments)}{\partial \moments} = \optJacobian(\multipliers(\moments))
 \frac{\partial \multipliers(\moments)}{\partial \moments}
  = \optJacobian(\multipliers(\moments)) \optHessian(\multipliers(\moments))^{-1}.
\label{eq:fluxJacobian}
\end{equation}

In the special case of the $\DMMN[2]$ model, the flux Jacobian is given by
\begin{equation}
\label{eq:fluxJacobianDMM2}
 \frac{\partial \Flux(\moments)}{\partial \moments} = \begin{pmatrix}
 0 & 1 & 0 & 0\\
 0 & 0 & 1 & 1\\
 \multicolumn{4}{c}{\frac{\partial \momentcomp{3+}}{\partial \moments}}\\
 \multicolumn{4}{c}{\frac{\partial \momentcomp{3-}}{\partial \moments}}
  \end{pmatrix},
\end{equation}
where $\momentcomp{3\pm} = \intpm{\SCheight^3\ansatz[\moments]}$ is obtained via the closure relation. 

The four eigenvalues $\eigenvalue_1\leq \eigenvalue_2\leq \eigenvalue_3\leq \eigenvalue_4$ of \eqref{eq:fluxJacobianDMM2}, which have been obtained numerically, are shown in Figures~\ref{fig:Eigenvalues} to \ref{fig:Eigenvalues4}.
\begin{figure}
\centering
\externaltikz{Eigenvalues}{\relinput{Images/Eigenvalues}}
\hspace{-0.0cm}
\externaltikz{CB}{\relinput{Images/colorbar}}
\caption{Eigenvalues of the $\DMMN[2]$ flux Jacobian $\frac{\partial \Flux(\moments)}{\partial \moments}$ along $\normalizedmomentcomp{1} = \frac12\left(\normalizedmomentcomp{2+} - \sqrt{\normalizedmomentcomp{2-}\, \left(1-\normalizedmomentcomp{2+}\right)} +\sqrt{\normalizedmomentcomp{2+}\, \left(1-\normalizedmomentcomp{2-}\right)} - \normalizedmomentcomp{2-}\right)$.}
\label{fig:Eigenvalues}
\end{figure}
In \figref{fig:Eigenvalues}, the eigenvalues are shown along the cut $$\normalizedmomentcomp{1} = \frac12\left(\normalizedmomentcomp{2+} - \sqrt{\normalizedmomentcomp{2-}\, \left(1-\normalizedmomentcomp{2+}\right)} +\sqrt{\normalizedmomentcomp{2+}\, \left(1-\normalizedmomentcomp{2-}\right)} - \normalizedmomentcomp{2-}\right),$$ which is exactly the mean of the upper and lower bound on $\normalizedmomentcomp{1}$. 

It can be seen that the eigenvalues are discontinuous in the degenerate corners of the realizable set (e.g. $\eigenvalue_4$ at $\normalizedmomentcomp{2-}=1=-\normalizedmomentcomp{1}$, $\normalizedmomentcomp{2+}=0$). This property exists also for the classical mixed-moment $\MMN[1]$ or the $\MN[2]$ model \cite{Schneider2016}.

\begin{figure}
\centering
\settikzlabel{fig:Eigenvaluesmax}
\settikzlabel{fig:Eigenvaluesmin}
\externaltikz{Eigenvalues2}{\relinput{Images/Eigenvalues2}}
\caption{Minimal and maximal distance between adjacent eigenvalues of the $\DMMN[2]$ flux Jacobian $\frac{\partial \Flux(\moments)}{\partial \moments}$ along the cut $\normalizedmomentcomp{1} = \frac12\left(\normalizedmomentcomp{2+} - \sqrt{\normalizedmomentcomp{2-}\, \left(1-\normalizedmomentcomp{2+}\right)} +\sqrt{\normalizedmomentcomp{2+}\, \left(1-\normalizedmomentcomp{2-}\right)} - \normalizedmomentcomp{2-}\right)$.}
\label{fig:Eigenvalues2}
\end{figure}

We investigate the hyperbolicity of the moment system in \figref{fig:Eigenvalues2} by comparing the distances of adjacent eigenvalues. \figref{fig:Eigenvaluesmax} shows that all eigenvalues coincide only if $\normalizedmomentcomp{1}=\normalizedmomentcomp{2+}=\normalizedmomentcomp{2-}=0$. Otherwise, at least two eigenvalues differ from each other. 

The results in \figref{fig:Eigenvaluesmin} propose the strict hyperbolicity of the moment system in the interior of the realizable set (since all eigenvalues differ). However, at the realizability boundary (e.g. $\normalizedmomentcomp{2+}+\normalizedmomentcomp{2-}=1$) at least two eigenvalues coincide.

\begin{figure}

\begin{center}
\includemedia[
label=eigenvalues,
3Dtoolbar,
activate=pageopen,
3Dviews=Images/eigenvalues.vws,
3Drender=Illustration,
]{\externaltikz{Eigenvalues3}{\relinput{Images/Eigenvalues3}}}{Images/eigenvalues.u3d}\externaltikz{CB}{\relinput{Images/colorbar}}\\
\mediabutton[3Dgotoview=eigenvalues:3]{\fbox{$\eigenvalue_1$} }
\mediabutton[3Dgotoview=eigenvalues:2]{\fbox{$\eigenvalue_2$} }
\mediabutton[3Dgotoview=eigenvalues:1]{\fbox{$\eigenvalue_3$} }
\mediabutton[3Dgotoview=eigenvalues:0]{\fbox{$\eigenvalue_4$} }
\end{center}

\caption{Eigenvalues at $5\%$ regularized boundary moments.\\
\textbf{Online version}: Press to activate 3D view ($\x$-axis (red): $\normalizedmomentcomp{2+}$, $\y$-axis (green): $\normalizedmomentcomp{2-}$, $z$-axis (blue): $\normalizedmomentcomp{1}$)}
\label{fig:Eigenvalues3}
\end{figure}

Since the optimization problem \eqref{eq:dual} is ill-conditioned close to the realizability boundary, the calculation of the multipliers $\multipliers$ at this part of the realizable set is error-prone or impossible, resulting in meaningless pictures. We therefore investigate \textbf{isotropically-regularized moments}
\begin{align*}
\normalizedmoments[\regularizationParameter] = (1-\regularizationParameter)\normalizedmoments+r\normalizedisotropicmoment,
\end{align*}
where an increase of the regularization parameter $\regularizationParameter\in[0,1]$ moves the original moment vector $\normalizedmoments$ towards the isotropic moment vector (in case of the $\DMMN[2]$ model: $\normalizedisotropicmoment = \left(0,\frac16,\frac16\right)^T$). \figref{fig:Eigenvalues3} shows the eigenvalues for $\regularizationParameter = 0.05$ and $\normalizedmoments\in\dRDone{\basis}$.

Similar to \figref{fig:Eigenvaluesmin}, \figref{fig:Eigenvalues4} shows the minimal distance of the $5\%$ regularized boundary moments. It is visible that the minimal distance is attained at $\normalizedmomentcomp{2+}+\normalizedmomentcomp{2-}=1$, which indicates that on this part of the boundary the moment system is only weakly hyperbolic.
\begin{figure}

\begin{center}
\includemedia[
label=eigenvaluesdiff,
3Dtoolbar,
activate=pageopen,
3Dviews=Images/eigenvaluesdiff.vws,
3Drender=Illustration,
]{\resizebox{1.5\figurewidth}{!}{\externaltikz{Eigenvalues4}{\relinput{Images/Eigenvalues4}}}}{Images/eigenvaluesdiff.u3d}\externaltikz{CBdiff}{\relinput{Images/colorbardiff}}
\end{center}

\caption{Minimal eigenvalue distance for $5\%$ regularized boundary moments.\\
\textbf{Online version}: Press to activate 3D view ($\x$-axis (red): $\normalizedmomentcomp{2+}$, $\y$-axis (green): $\normalizedmomentcomp{2-}$, $z$-axis (blue): $\normalizedmomentcomp{1}$)}
\label{fig:Eigenvalues4}
\end{figure}

However, an analytical investigation of the eigenvalues in the limit cases $\normalizedmomentcomp{1} = \normalizedmomentcomp{2+} - \sqrt{\normalizedmomentcomp{2-}\, \left(1-\normalizedmomentcomp{2+}\right)}$ and $\normalizedmomentcomp{1} = \sqrt{\normalizedmomentcomp{2+}\, \left(1-\normalizedmomentcomp{2-}\right)} - \normalizedmomentcomp{2-}$ is still open.

\section{Numerical experiments}
We use the first-order, realizability-preserving, implicit-explicit kinetic scheme derived in \cite{Schneider2016b}. All results are computed on a grid with $1000$ points. The reference solution is given by the $\PN[99]$ model \cite{Lewis-Miller-1984}. 

\subsection{Plane source}
\label{sec:Planesource}
In this test case an isotropic distribution with all mass concentrated in the middle of an infinite domain $\z \in
(-\infty, \infty)$ is defined as initial condition, i.e.
\begin{align*}
 \distributiontzero(\z, \SCheight) = \distributionvacuum + \delta(\z),
\end{align*}
where the small parameter $\distributionvacuum = 0.5 \times 10^{-8}$ is used to
approximate a vacuum.
In practice, a bounded domain must be used which is large
enough that the boundary should have only negligible effects on the
solution. For the final time $\tf = 1$, the domain is set to $\Domain = [-1.2, 1.2]$ (recall that for all presented models the maximal speed of propagation is bounded in absolute value by one).

At the boundary the vacuum approximation
\begin{align*}
 \distributionboundary(\timevar,\zL,\SCheight) \equiv \distributionvacuum \quand
 \distributionboundary(\timevar,\zR,\SCheight) \equiv \distributionvacuum
\end{align*}
 is used again. Furthermore, the physical coefficients are set to $\scattering \equiv 1$, $\absorption \equiv 0$ and $\source \equiv 0$.

All solutions are computed with an even number of cells, so the initial Dirac delta lies on a cell boundary.
Therefore it is approximated by splitting it into the cells immediately to the left and right. In \figref{fig:Planesource}, only positive $\z$ are shown since the solutions are always symmetric around $\z = 0$.\\

The figure shows the solution of the $\DMMN[2]$ model in comparison to some mixed-moment $\MMN$ and full-moment $\MN$ models with a similar number of degrees of freedom (i.e. the number of moments $\momentnumber$). 

Observe that the difference between $\MMN[2]$ and $\DMMN[2]$ is negligible\footnote{This is no longer true if the isotropic scattering operator $\collision{\distribution} = \distribution - \frac12\int\limits_{-1}^1\distribution(\SCheight')~d\SCheight'$ is used.}. Although the $\DMMN[2]$ model is exactly between $\MMN[1]$ and $\MMN[2]$ (regarding degrees of freedom), its solution is much closer to those of the $\MMN[2]$ model.

Doing the same comparison with the $\MN$ models shows that the $\DMMN[2]$ model is closer to the $\MN[2]$ than to the $\MN[3]$ model (while all three models differ insignificantly from the reference solution\footnote{This results from the quadratic dependence of the Laplace-Beltrami eigenvalues with respect to the moment order $\momentorder$.}).

\begin{figure}[h]
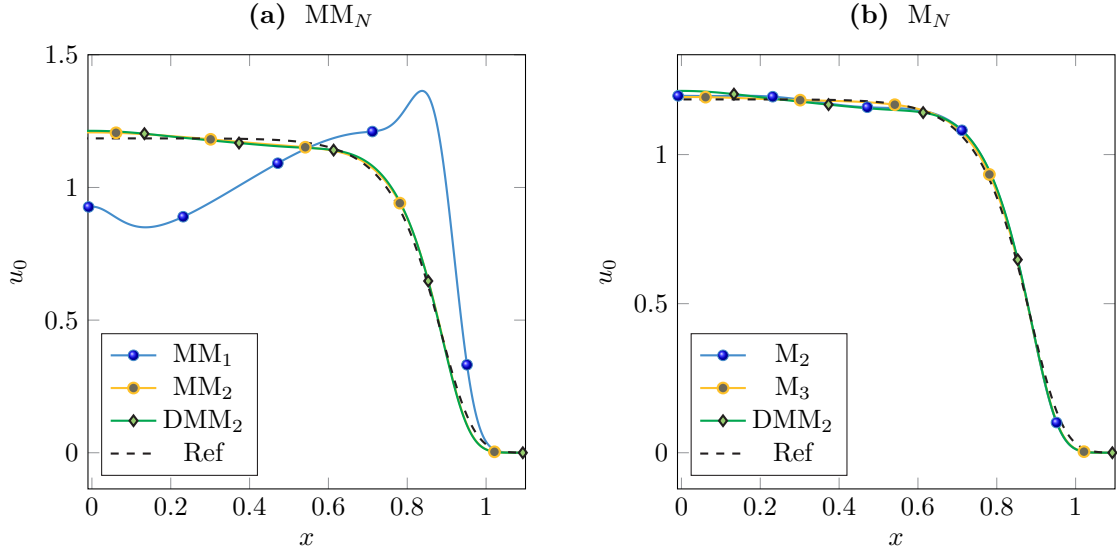

\externaltikz{Planesource}{\relinput{Images/PlanesourceLBCutsMixedMoments}}
 \centering
\caption{Results for the plane-source test at the final time $\tf = 1$.}
 \label{fig:Planesource}
\end{figure}

\subsection{Source beam}
We present a discontinuous version of the source-beam problem from
\cite{Hauck2013}, as in \cite{Schneider2015a,Schneider2015b}.
The spatial domain is $\Domain = [0,3]$, and
\begin{gather*}
 \absorption(\z) = \begin{cases}
   1 & \text{ if } \z\leq 2,\\
   0 & \text{ else},
  \end{cases} \quad
 \scattering(\z) = \begin{cases}
   0 & \text{ if } \z\leq 1,\\
   2 & \text{ if } 1<\z\leq 2,\\
   10 & \text{ else},
  \end{cases} \quad
 \source(\z) = \begin{cases}
   \frac12 & \text{ if } 1\leq \z\leq 1.5,\\
   0 & \text{ else},
  \end{cases}
\end{gather*}
with initial and boundary conditions
\begin{gather*}
 \distributiontzero(\z, \SCheight) \equiv \distributionvacuum, \\
 \distributionboundary(\timevar,\zL,\SCheight) = \cfrac{e^{-10^5(\SCheight-1)^2}}{\ints{e^{-10^5(\SCheight-1)^2}}}
 \quand
 \distributionboundary(\timevar,\zR,\SCheight) \equiv \distributionvacuum.
\end{gather*}
The final time is $\tf = 2.5$. As above, the results for $\MN$, $\MMN$ and $\DMMN$ models are shown in \figref{fig:SourceBeam}.

\begin{figure}[h]
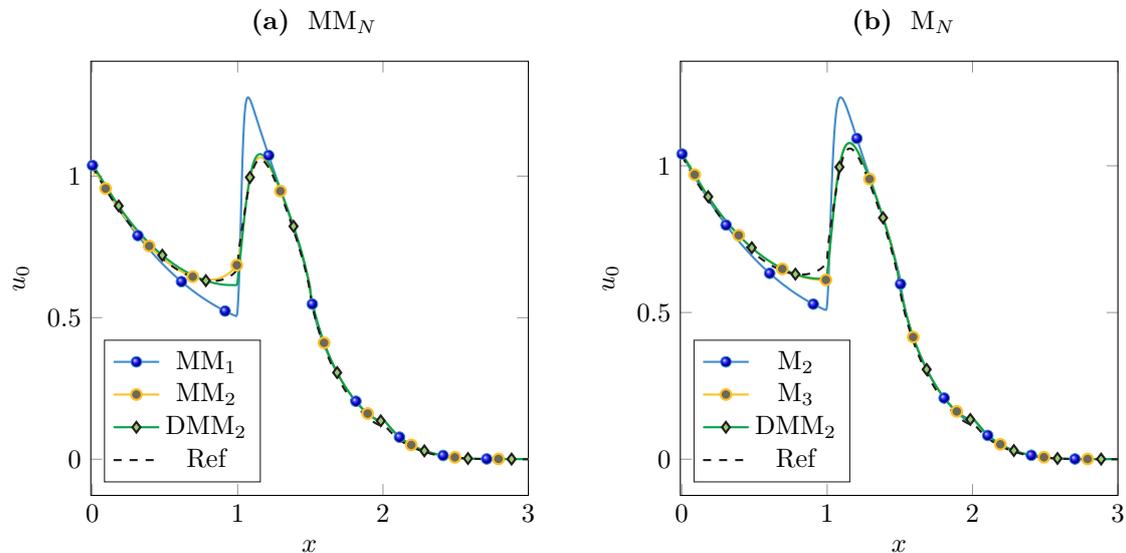

\externaltikz{SourceBeam}{\relinput{Images/SourceBeamLBCutsMixedMoments}}
 \centering
\caption{Results for the source-beam test at the final time $\tf = 2.5$.}
 \label{fig:SourceBeam}
\end{figure}

As has been remarked in \cite{Schneider2016}, the $\MMN[1]$ and the $\MN[2]$ model coincide well in this situation. Surprisingly, a similar statement is valid for the $\DMMN[2]$ and $\MN[3]$ model.
As before, the $\DMMN[2]$ model behaves qualitatively above the level of the $\MMN[1]$ and $\MN[2]$ model and similarly or slightly below those of the $\MMN[2]$ model (which has the highest number of degrees of freedom).

\section{Conclusions and outlook}
We have derived the $\DMMN$ model and its associated realizability domain $\RD{\basis}{}$ for $\momentorder=2$. Numerical results suggest that, despite having one degree of freedom less, the $\DMMN[2]$ model performs comparable to the $\MMN[2]$ model. The key advantage of this class of moment models is that in the approximation of the Laplace-Beltrami operator only macroscopic quantities occur, whereas microscopic terms are present in the classical mixed-moment model. While this appears to have no significant impact in one dimension, where the position of the microscopic term is well-located, a more stable numerical approximation can be expected in two or three dimensions.

Future work should include the derivation of realizability theory for moment-orders $\momentorder\geq 3$, to gain more insight into the arising non-linearities in this modified problem. Furthermore, the $\DMMN$ should be investigated in higher dimensions, especially in the context of the Fokker-Planck operator. The results in \cite{Schneider2015c,Schneider2016} indicate that mixed moments are hardly applicable in this framework due to the difficulty in the discretization of the Laplace-Beltrami operator. This should be avoidable using the differentiable basis functions. 
Finally, Kershaw closures \cite{Ker76,Schneider2016a,Schneider2015,Schneider2014} should be investigated to improve the efficiency of the $\DMMN$ model by avoiding the need to solve the moment system \eqref{eq:MomentConstraints}.

\section*{Acknowledgements}
The conversion from Matlab data to the included u3d data has been obtained using the Matlab function \emph{fig2u3d} written by Ioannis Filippidis \cite{Filippidis2015}.

\bibliographystyle{siam}
\bibliography{bibliography}

\end{document}